\newtheorem{thm}{Theorem}
 \newtheorem{lemma}[thm]{Lemma}
\theoremstyle{definition}
\def \no#1#2#3 {{\bf #1} (#3), #2.}
\def \eds#1#2#3 {#1, #2, #3.}
\def\R{{\mathbb R}}
\def\d{{\rm d}}
\def\:{{\colon}}
\def\be#1{\begin{equation}\label{#1}}
\def\ee{\end{equation}}
\def\<{\langle}
\def\>{\rangle}
\def\coloneqq{:=}
\newcommand{\lec}{\lesssim}
\newcommand{\bs}{\begin{split}}
\newcommand{\essss}{\end{split}}
\newcommand{\eqnb}{\begin{equation}}
\newcommand{\eqne}{\end{equation}}
\renewcommand{\ee}{\mathrm{e}}
\renewcommand{\R}{\mathbb{R}}
\newcommand{\wb}{\widetilde{b}}
\newcommand{\wG}{\widetilde{\Gamma}}
\newcommand{\wf}{\widetilde{f}}
\renewcommand{\d}{\mathrm{d}}
\begin{document}

\title[On minimal induced drag]{An improvement to Prandtl's 1933 model for minimizing induced drag} 
\author{W. S. O\.za\'nski}
\address{Florida State University, Department of Mathematics, Tallahassee, FL 32306, USA}
\email[]{wozanski@fsu.edu}
\maketitle

\date{}

\medskip

\begin{abstract}
We consider Prandtl's 1933 model for calculating circulation distribution function $\Gamma$ of a finite wing which minimizes induced drag, under the constraints of prescribed total lift and moment of inertia. We prove existence of a global minimizer of the problem without the restriction of nonnegativity $\Gamma\geq 0$ in an appropriate function space. We also consider an improved model, where the prescribed moment of inertia takes into account the bending moment due to the weight of the wing itself, which leads to a more efficient solution than Prandtl's 1933 result.  
\end{abstract}


\noindent\thanks{\em Keywords:\/}
vortex lines, aerodynamics, airfoil design, lifting-line theory, Prandtl, minimized induced drag, 3D airfoil

\section{Introduction}\label{sec_intro}

We consider the problem of minimal induced drag, which arises from Prandtl's lifting-line theory \cite{prandtl_trag,prandtl_NACA}. A few years after developing the fundamental principles of the theory, Prandtl published a short paper \cite{prandtl_33}, where he discussed a new solution to the drag minimization problem that is more efficient than the elliptic distribution. It appears that  only recently the findings of this paper have attracted some attention. For example, it has been studied extensively from the engineering perspective by NASA's Armstrong Flight Research Center \cite{bmjeg,newton}, see also the series of works on wing design by Hunsaker, Philips, and collaborators  \cite{philips_hunsaker_56b, philips_hunsaker_joo_56a, philips_hunsaker_taylor}. In this paper we aim to contribute a mathematical perspective.\\

 In order to introduce the problem, let us consider a symmetric wing of span $b>0$ and we let $\Gamma \colon (-b/2,b/2) \to \R$ denote the circulation distribution function. Namely, $\Gamma (x)$ denotes the circulation (in the $y,z$-plane) around a wing's crosssection at a spanwise location $x\in (-b/2,b/2)$. The downwash $w(x)$ is then given by 
\eqnb\label{downwash_intro}
w[\Gamma ](x) = \frac1{4\pi} \mathrm{p.v.} \int_{-b/2}^{b/2} \Gamma' (x' ) \frac{\d x'}{x-x'},  
\eqne
see \cite[(14)]{prandtl_betz} or \cite[(5.23)]{houghton}.

The induced drag on the wing can be then modelled by the expression 
\eqnb\label{def_of_drag}
D [\Gamma , b]\coloneqq \rho \int_{-b/2}^{b/2} \Gamma (x) w(x) \d x ,
\eqne
see \cite[(16)]{prandtl_betz}. We also refer the reader to \cite[Section~5.5]{houghton} for a modern exposition of the subject. By the Kutta-Joukowski law, the total lift on the wing is
\eqnb\label{A1}
A= \rho \nu \int_{-b/2}^{b/2} \Gamma (x) \d x,
\eqne
where $\rho$ denotes air density and $\nu>0$ denotes the freestream airspeed (see \cite{prandtl_33}), and the total moment of inertia is
\eqnb\label{A2}
 \rho \nu \int_{-b/2}^{b/2} |M(x)| \d x \leq Ar^2 ,
\eqne
where
\eqnb\label{bending_moment_intro}
M(x) \coloneqq  \int_x^{b/2} \Gamma (y ) (y-x)\d y
\eqne
is the bending moment acting at the location $x\in (0,b/2)$ of the wing due to the lift force acting on the segment between $x$ and $b/2$. 
Note that, assuming that 
\eqnb\label{moment_ass}
M (x) \geq 0\qquad \text{ for all }x\in (0,b/2),
\eqne  
one can substitute \eqref{bending_moment_intro} into \eqref{A2} and change the order of integration to obtain
\eqnb\label{A2_with_nonneg}
Ar^2 = \frac{\rho \nu}2 \int_{-b/2}^{b/2} \Gamma (x) x^2 \d x.
\eqne

The problem of the minimal induced drag is concerned with finding
\eqnb\label{MID_prob}
\inf_{b>0,\Gamma } D[\Gamma , b],
\eqne
where the infimum is taken over all $b$, $\Gamma$ such that \eqref{A1}, \eqref{A2} hold,  where $A$ and $Ar^2$ are given.

Clearly, the above problem is of central importance in aerodynamics.
 
\subsection{Prandtl approach}
In his 1933 paper \cite{prandtl_33} Prandtl considered downwash of the form
\eqnb\label{downwash_form} w(x) = C_1 + C_2 x^2
\eqne
for some $C_1,C_2\in \R$ and considered the dimension rescaling 
\[
\xi \coloneqq \frac{2x}b,
\]
so that $\xi \in (-1,1)$. He then noted that such downwash $w$ is achieved (via \eqref{downwash_intro}) if one chooses
\[
\Gamma (\xi ) = \Gamma_0 (1-\mu \xi^2 ) \sqrt{1-\xi^2 },
\]
where $\Gamma_0, \mu \in \R$ are parameters related to $C_1,C_2$ by 
\eqnb\label{choice_C12}
C_1 = \frac{\Gamma_0}{2b} \left( 1+\frac{\mu}{2} \right)  , \qquad C_2 = -\frac{6 \mu \Gamma_0}{b^3}.
\eqne
Substituting  this ansatz into \eqref{def_of_drag} gives
\eqnb\label{temp01}
D = \frac{\pi \rho \Gamma_0^2 }{8} \left( 1-\frac{\mu}{2} +\frac{\mu^2}4 \right),  
\eqne
and the constraints of the given total lift \eqref{A1} and the given total moment of inertia \eqref{A2} taken with ``$=$'' give\footnote{We note that there is a computation error in (9) in the Prandtl's paper, where ``$\sqrt{2}$'' is missing, this is a consequence of the fact that ``$64$'' in the constraint \cite[(8)]{prandtl_33} should be replaced by ``$128$''.}
\[
A=\rho \nu \frac{b\pi \Gamma_0}4 \left( 1-\frac{\mu}4 \right)\qquad \text{ and }\qquad b=4 \sqrt{2} r \sqrt{\frac{1-\frac{\mu}4}{1-\frac{\mu}2}}.
\]
respectively. Using the last formula (for $b$) in the previous one, and then substituting $\Gamma_0$ into  \eqref{temp01} gives 
\eqnb\label{prandtl_optim_drag}
D = \frac{A^2}{16\pi \rho \nu^2 r^2 } \frac{\left(1-\frac{\mu}2 \right) \left(1-\frac{\mu}2 + \frac{\mu^2}4 \right) }{\left( 1-\frac{\mu}4 \right)^3}.
\eqne
We can now observe that the right-hand side is decreasing for $\mu \in (0,2)$. However, the case $\mu >1$ becomes nonphysical as bending moments can become negative. Thus the minimum is achieved at $\mu=1$, which is the Prandtl 1933 solution. We also note that the corresponding induced drag $D = A^2/18 \pi \rho \nu^2 r^2$ equals to $8/9$ of the induced drag in the elliptic case\footnote{Note that taking $\mu =0$ in \eqref{choice_C12} gives circulation distribution $\Gamma (\xi ) = \Gamma_0 \sqrt{1-\xi^2}$, whose plot is an arc of an ellipse, hence ``elliptic case'' or ``elliptic distribution''.}  $\mu =0$, i.e., $11.11...\%$ reduction. 

\section{The aim of the paper}\label{sec_aim}

In this paper we address two concerns regarding the above argument. 

First we note that  the  ansatz \eqref{downwash_form} might appear arbitrary, and so, in particular, it is not clear whether such form of $w$ would yield a global minimizer (i.e., among all functions). To this end, in Section~\ref{sec_minimizer} we prove existence of a global minimizer of the problem \eqref{MID_prob} in an appropriate function space.

Secondly,  we make an observation that the model considered above can be improved by considering the bending moment due to the weight of the wing itself (in \eqref{bending_moment_intro}). In fact, such a moment acts in the opposite direction to the lift force, which shows that one can satisfy the moment constraint with larger $\Gamma$, which in turn leads to a solution with smaller induced drag. We thus show, in Section~\ref{sec_new_model},  that the corresponding minimization problem (see Theorem~\ref{T02} below) also  admits a minimizer (in the same function space), and we give an example of $\Gamma$ that is more efficient than Prandtl's solution \eqref{prandtl_optim_drag} above (with $\mu=1$).  

For simplicity, we  assume that all material constants $A,r,\rho,\nu$ equal $1$.  

\section{Existence of a minimizer of the Prandtl's problem}\label{sec_minimizer}

Here we show that the Prandtl problem \eqref{MID_prob} of the minimal induced drag admits a global minimizer; see Theorem~\ref{T01}. \\

In order to motivate the result, we first recall the work of Hunsaker and Philips \cite{philips_hunsaker_56b} who treat the problem using the Fourier Sine Series and show that only the first two Fourier modes are used to guarantee that the constraints \eqref{A1},\eqref{A2_with_nonneg} hold. In order to illustrate this, we will use the approach of Chebyshev polynomials. Namely we consider $\Gamma (\xi )$ of the form
\[
\Gamma (\xi ) = \left( a_0 U_0 (\xi ) + a_2 U_2 (\xi )+ \ldots \right) \sqrt{{1-\xi^2}},
\]
where $U_{l}$ denotes the Chebyshev polynomial of the second kind of order $l$ (see \eqref{cheb1}--\eqref{cheb2}). We  only use even orders $l$ since $\Gamma$ is assumed to be even. We discuss the Chebyshev polynomials in Appendix~\ref{ap_chebyshev}, but we recall at this point that
\[
U_0 (\xi ) =1,\qquad U_2 (\xi )= 4\xi^2 -1.
\]
Also, note that the set of Chebyshev polynomials $\{ U_{2k} \}$ forms an orthonormal basis for
\eqnb\label{def_H}
H \coloneqq \{ f\colon (0,1) \to \R \colon \| f \|_{H}<\infty \},
\eqne
equipped with the inner product
\[
(f,g) \coloneqq \int_0^1 f(\xi ) g(\xi )\sqrt{1-\xi^2} \d \xi \qquad \text{ and norm }\quad \| f \|_{H} \coloneqq (f,f)^{1/2}.
\]
 We note that if 
\[
\Gamma (\xi ) = f(\xi) \sqrt{1-\xi^2} \qquad \text{ for some }f\in H
\]
then the total lift constraint \eqref{A1} becomes
\eqnb\label{A1_simple}
a_0 = \frac{4}{\pi b},
\eqne
due to the orthogonality of the $U_n$'s, namely \eqref{cheb_u_ortho}.\\

We emphasize that the $U_{n}$'s are eigenfunctions of the downwash operator $w[\Gamma]$ composed with multiplication by $\sqrt{1-\xi^2}$, with eigenvalues $(n+1)/4$, i.e.,
\eqnb\label{eigen_U}
w[U_n \sqrt{1-(\cdot )^2}](\xi ) = \frac{n+1}4 U_n (\xi).
\eqne
Indeed, using the relations between the $U_n$'s and the Chebyshev polynomials of the first kind $T_n$ (see \eqref{cheb_der}, \eqref{cheb_integral1} in Appendix~\ref{ap_chebyshev}) we obtain
\[
\begin{split}
w[U_n \sqrt{1-(\cdot )^2}](\xi ) &= \frac1{4\pi} \mathrm{p.v.}\int_{-1}^1 \frac{(U_n (\eta ) \sqrt{1-\eta^2})'}{\xi-\eta }\d \eta \\
&= \frac1{4\pi} \mathrm{p.v.}\int_{-1}^1 \frac{U_n' (\eta ) (1-\eta^2) -\eta U_n(\eta)  }{\sqrt{1-\eta^2}(\xi-\eta)}\d y \\
&= -\frac{n+1}{4\pi} \mathrm{p.v.}\int_{-1}^1 \frac{T_{n+1} (\eta)  }{\sqrt{1-\eta^2}(\xi-\eta)}\d \eta \\
&= \frac{n+1}4 U_n (\xi),
\end{split}
\]
proving \eqref{eigen_U}.

Consequently, the induced drag $D[b,\Gamma ]$ has a simple expression for $\Gamma \in H$, which reads
\eqnb\label{drag_norm}
\begin{split}
D[b,\Gamma ] &= 2\int_{0}^1 \Gamma (x) w[\Gamma ] (x) \d x = 2\sum_{m,n\geq 0} a_{2n} a_{2m} \frac{2m+1}4 \int_0^1 U_{2n}(\xi ) U_{2m} (\xi ) \sqrt{1-\xi^2} \d \xi  \\
&=\frac{\pi}{8} \sum_{m\geq 0} (2m+1) a_{2m}^2 .
\end{split} 
\eqne
Here, $b$ is given by \eqref{A1_simple}, but one can see that $D[b,\Gamma ]$ is independent of $b$. In fact, for each $b$, $\sqrt{D[b,\cdot ]}$ can be thought of as the $H^{1/2}$ norm with respect to the orthonormal basis $\{ U_{2n} \}_{n\geq 0}$. 
In particular, in light of the constraints \eqref{A1}, \eqref{A2_with_nonneg} and the orthogonality \eqref{cheb_u_ortho} of the Chebyshev polynomials, we note that only the first two modes, $a_0,a_2$ are relevant to satisfy the constraints \eqref{A1}, \eqref{A2_with_nonneg}, while the remaining modes, if nontrivial, only increase $D$. Thus, let us restrict ourselves to the case where $a_{2k}=0$ for all $k\geq 2$. Then the problem reduces to \eqref{downwash_form}; namely, if $\Gamma (\xi ) = (a_0U_0 (\xi )+ a_2U_2 (\xi )) \sqrt{1-\xi^2} $ then $\Gamma\geq 0$ if and only if 
\eqnb\label{nonneg_G_a2a0}
a_2 \geq -a_0/3,
\eqne
see Fig.~\ref{fig_prandtl1}. In the borderline case $a_2=-a_0/3$ we thus obtain
\eqnb\label{01}
D = \frac{\pi}8 \left( a_0^2 + 3 a_2^2 \right)  = \frac{\pi }{6} a_0^2.
\eqne
Since \eqref{moment_ass} ($M\geq 0$) is assumed in the Prandtl solution, we can thus find $a_0$ from the moment constraint \eqref{A2_with_nonneg}, which becomes
\eqnb\label{calc1}\begin{split}
1&= \frac{b^3}{16} \int_{-1}^1 (a_0 U_0 (\xi ) + a_2 U_2(\xi ))\sqrt{1-\xi^2} \, \xi^2\, \d \xi \\
&= \frac{4}{\pi^3a_0^3} \int_{-1}^1 (a_0 U_0 (\xi ) + a_2 U_2(\xi ))\sqrt{1-\xi^2}  \frac{U_2(\xi ) + U_0 (\xi )}4 \d \xi \\
&= \frac{a_0 + a_2}{2\pi^2a_0^3} = \frac{1}{3\pi^2a_0^2}  
\end{split}
\eqne
due to \eqref{A1_simple} and the orthogonality  \eqref{cheb_u_ortho} of Chebyshev polynomials.
(Recall we assume that $A=r=\rho=\nu=1$ for simplicity.)
Substitution into \eqref{01} gives $D= \frac{1}{18\pi }$, which gives the same result as \eqref{prandtl_optim_drag} at the optimal point $\mu=1$, see also Figure~\ref{fig_prandtl1} below.
\begin{center}
 \includegraphics[width=9cm]{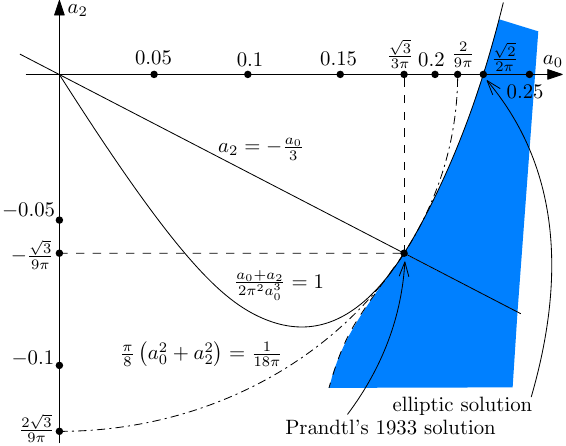}
 
 \nopagebreak
 \captionsetup{width=.8\linewidth}
  \captionof{figure}{A geometric interpretation of the Prandtl's 1933 solution to the minimimal induced drag problem. Note that the region that Prandtl \cite{prandtl_33} considered corresponds to the part of blue region above the line $a_2=-a_0/3$. The part below the line (which corresponds to the case when $\Gamma$ can change sign) was obtained by a numerical simulation of the constraint \eqref{A2}. }\label{fig_prandtl1} 
\end{center}

Note that, in this two-dimensional restriction, the minimizer of the induced drag problem is the point in the region determined by the nonnegativity constraint \eqref{moment_ass} and the moment constraint \eqref{A2_simple} (depicted in the sketch by the blue region) that is closest to the origin in the sense of ellipse $a_0^2+3a^2_2=\text{const.}$ of the smallest radius (depicted in the sketch by the dash-dotted line), recall~\eqref{01}. In particular, one can observe that the  elliptic distribution (i.e., when $\Gamma (\xi ) = a_0 \sqrt{1-\xi^2}$) admits minimal induced drag $D=1/16\pi$ at $a_0=\sqrt{2}{2\pi}$, which is $9/8$ times larger than the Prandtl solution, as mentioned below \eqref{prandtl_optim_drag}. 

We note that the above analysis is equivalent to the approach using the Fourier Sine series upon the change of variable $\xi = -\cos (\theta )$, which was used in \cite{philips_hunsaker_56b, philips_hunsaker_joo_56a,philips_hunsaker_taylor}.\\

We emphasize that the above analysis is carried out under the assumption that $\Gamma \geq 0$, which implies that $M\geq 0$, so that the change of order of integration in \eqref{A2_with_nonneg} is justified. Without this assumption, it is not clear why would the first two Chebyshev polynomials suffice in search of a global minimizer. In fact, one can imagine a scenario, when introducing higher order oscillation one can obtain a cancellation inside the integral defining $M$. 
To address this, we prove the existence of global minimizers of the minimal induced drag problem under only the assumptions \eqref{A1}--\eqref{A2}. 

\begin{thm}[Minimizer of the Prandtl problem]\label{T01}
Given $b_0>0$ there exist $\wb\in [0,b_0]$ and $\wG \in H$ satisfying the total lift and moment assumptions \eqref{A1}, \eqref{A2} such that 
\[
D[\wb ,\wG] = \min_{\substack{b\in [0,b_0],\\ \Gamma \in H\text{ satisfying }\eqref{A1},\eqref{A2}}} D[b,\Gamma ].
\]
\end{thm}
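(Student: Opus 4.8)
The plan is to apply the direct method of the calculus of variations. The key structural facts are already in place in the excerpt: by \eqref{drag_norm} the drag $D[b,\Gamma]=\frac{\pi}{8}\sum_{m\ge0}(2m+1)a_{2m}^2$ depends only on the Chebyshev coefficients $(a_{2m})$ of the profile $f$ (where $\Gamma=f\sqrt{1-\xi^2}$), not on $b$; thus $\sqrt{D}$ is the norm of the weighted Hilbert space $V\coloneqq\{f\in H\colon \sum_m (2m+1)a_{2m}^2<\infty\}$, which embeds compactly into $H$ since the weights $2m+1\to\infty$. First I would re-express the constraints in terms of the pair $(b,f)$: the lift constraint \eqref{A1} reads $a_0=4/(\pi b)$ by \eqref{A1_simple}, while, after the substitution $\xi=2x/b$, the moment constraint \eqref{A2} separates as $\tfrac{b^3}{4}J[f]\le1$, where $J[f]\coloneqq\int_0^1|\widetilde M(\xi)|\,\d\xi$ and $\widetilde M(\xi)\coloneqq\int_\xi^1 f(\eta)\sqrt{1-\eta^2}(\eta-\xi)\,\d\eta$ carries all the $f$-dependence.

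Next I would run the minimization. The feasible set is nonempty (the elliptic profile $f=a_0U_0$, $a_0=4/(\pi b)$, with $b$ chosen small enough satisfies both constraints), so $D_*\coloneqq\inf D$ is finite; choose a feasible minimizing sequence $(b_n,f_n)$ with $D[f_n]\to D_*$. Boundedness of $D[f_n]$ bounds $\sum_m(2m+1)(a^n_{2m})^2$, i.e.\ $(f_n)$ is bounded in $V$; and since $D[f_n]\ge\frac{\pi}{8}(a_0^n)^2=\frac{2}{\pi b_n^2}$, the spans $b_n$ stay bounded away from $0$. After passing to a subsequence, $b_n\to\wb\in(0,b_0]$, while $f_n\rightharpoonup f$ weakly in $V$ and, by the compact embedding, $f_n\to f$ strongly in $H$. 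Denote by $\wG=f\sqrt{1-(\cdot)^2}$ the associated circulation. For the lift constraint, $a_0^n\to a_0$ (coefficient extraction is bounded on $V$) and $a_0^n=4/(\pi b_n)\to 4/(\pi\wb)$ as $\wb>0$, so $a_0=4/(\pi\wb)$; for the objective, weak lower semicontinuity of the $V$-norm gives $D[f]\le\liminf D[f_n]=D_*$.

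It remains to pass to the limit in the moment constraint, and this is where I expect the main obstacle, precisely because the nonnegativity hypothesis \eqref{moment_ass} has been dropped and the absolute value in $J$ cannot be removed (the text explicitly warns that higher modes might create cancellations inside $M$). The decisive observation is that $\widetilde M(\xi)=(f,g_\xi)_H$ is the $H$-inner product of $f$ with $g_\xi(\eta)\coloneqq(\eta-\xi)\mathbf{1}_{[\xi,1]}(\eta)$, and that $\sup_{\xi\in[0,1]}\|g_\xi\|_H<\infty$. From this, $|J[f]-J[f']|\le\|f-f'\|_H\int_0^1\|g_\xi\|_H\,\d\xi\lesssim\|f-f'\|_H$, so $J$ is (Lipschitz) continuous on $H$ and hence $J[f_n]\to J[f]$; the same conclusion also follows from weak $H$-convergence alone, since $\widetilde M_n(\xi)\to\widetilde M(\xi)$ pointwise under the uniform domination $|\widetilde M_n(\xi)|\le(\sup_n\|f_n\|_H)\|g_\xi\|_H$, to which dominated convergence applies. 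Combined with $b_n^3\to\wb^3$ this yields $\tfrac{\wb^3}{4}J[f]=\lim_n\tfrac{b_n^3}{4}J[f_n]\le1$, so $(\wb,\wG)$ is feasible; together with $D[f]\le D_*$ this makes it a minimizer, as required.
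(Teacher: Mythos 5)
Your proof is correct, and it reaches the conclusion by a genuinely different route at the decisive step, namely passing to the limit in the moment constraint \eqref{A2_simple}. The paper handles that step with convexity: for fixed $b$ the constraint set $K\subset H$ is closed and convex, the $b$-dependence is absorbed into multiplication operators $A_k$ converging in operator norm, and a Mazur-type lemma with operator perturbation (Lemma~\ref{L01}, proved via supporting hyperplanes in Appendix~\ref{ap_convex_sets}) gives $\widetilde{A}\wf\in K$. You instead show that the moment functional $J[f]=\int_0^1\left|\widetilde{M}(\xi)\right|\d\xi$ is weakly sequentially continuous on $H$: writing $\widetilde{M}(\xi)=(f,g_\xi)$ with $g_\xi(\eta)=(\eta-\xi)\mathbf{1}_{[\xi,1]}(\eta)$ and $\sup_{\xi}\|g_\xi\|_H\leq 1$, weak convergence gives $\widetilde{M}_n(\xi)\to\widetilde{M}(\xi)$ pointwise under a uniform integrable bound, and dominated convergence yields $J[f_n]\to J[f]$. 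This is stronger than what the paper uses: weak continuity of $J$ makes the feasible set (along the convergent $b_n$) weakly closed with no appeal to convexity whatsoever, so your argument is self-contained and dispenses with Appendix~\ref{ap_convex_sets} entirely. What the paper's route buys in exchange is generality and reusability: it needs only closedness and convexity of $K$, hence it would survive modifications of the constraint that destroy weak continuity, and Lemma~\ref{L01} is reused for Theorem~\ref{T02}. Two smaller points: your scaffolding with the space $V$ (norm $\sqrt{D}$) and the compact embedding $V\hookrightarrow H$ is correct but superfluous, since, as your own dominated-convergence alternative shows, weak convergence in $H$ --- which the paper extracts directly from $\|f_k\|_H\lesssim D[b_k,\Gamma_k]$, see \eqref{drag_norm} --- already suffices; on the other hand, you supply details the paper leaves implicit and which are worth recording, namely nonemptiness of the feasible set for every $b_0>0$ (via the elliptic profile with small $b$), the bound $b_n^2\geq 2/(\pi D[b_n,\Gamma_n])$ keeping $\wb>0$, and the weak lower semicontinuity of $D$.
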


We note that Theorem~\ref{T01} does not assume nonnegativity of $M$ or $\Gamma$. We note, since the space of smooth functions is dense in $H$, the claim provided by Theorem~\ref{T01} characterizes the regularity of a global minimizer. 

The restriction for $b\in [0,b_0]$ arises from the possible degeneration when one looks for a general solution (in $H$, not just $H_2\coloneqq \mathrm{Span}\{ U_0, U_2 \}$), as it is not clear how to keep track of the moment constraint \eqref{A2}. Namely, one may think of a scenario where $\Gamma (\xi ) = \sqrt{1-\xi^2} \sum_{n\geq 0} a_{2n}U_{2n} (\xi)$ has many nonzero coefficients $a_{2n}$ which are chosen in a way that while $\Gamma(\xi )$ changes sign many times as $\xi $ moves from $0$ to $1$. In such a scenario, $M$ can be reduced (particularly for $\xi <1/2$) since the integral \eqref{bending_moment_intro} defining $M$ would admit many cancellations. At the same time it is perhaps possible to reduce the coefficients $a_{2n}$ for small $n$, so that the induced drag could shrink, since the moment of inertia constraint \eqref{A2} would become easier to satisfy. Namely, the distance to $0$, in the sense of \eqref{drag_norm}, of the region of  $\{ a_0, a_2 , \ldots \} \in l^2$ for which \eqref{A2} hold would shrink. Possibly, it could shrink to zero, so that one could obtain an infinitely long wing (i.e., $a_0=0$ in the limit) of zero induced drag. On the one hand, one would expect that such a degenerate scenario could be excluded, although we are unable to prove it at this point. On the other hand, introduction of higher coefficients $a_{2n}$ is reminiscent of the tubercles appearing on humpback whale flippers, which have been observed to delay the stall angle of a flipper, as well as reduce induced drag \cite{fish_battle_1995,MMHF_2004}.\\

Here we only observe that in the case of only the first two modes  present (i.e., $a_{2n}=0$ for $n\geq 2$), which does not allow oscillations of $\Gamma (\xi)$, numerical simulation suggests that the minimized induced drag is achieved in the case when $\Gamma $ is a nonnegative function. To be precise, a simple computation using MATLAB allows one to compute the value of the total moment 
\[
\frac{ b^3}4 \int_0^1 \left| \int_\xi^1 (a_0 U_0 (\eta ) + a_2 U_2 (\eta ) ) \sqrt{1-\eta^2} (\eta - \xi )\d \eta \right| \d \xi 
\] 
for various $a_0,a_2$, which we present in Figure~\ref{fig_prandtl1} (the part below the line $a_2=-a_0/3$). This part is separated away from the ellipse $\frac{\pi}8 (a_0^2+a_2^2)= \frac{1}{18\pi}$, which corresponds to the minimized induced drag $D$. \\

Before proving Theorem~\ref{T01} we note that  the main difficulty, which is also our obstacle to obtaining uniqueness, is the lack of convexity of the constraints \eqref{A1},\eqref{A2}. These are convex with respect to $\Gamma$, but not convex with respect to the pair $(b,\Gamma)$. Recall from Section~\ref{sec_aim} that we assume, for simplicity, we assume that  $\nu=\rho=1$;  we also apply the rescaling $x=\xi b/2$, and we  abuse the notation slightly by writing $\Gamma (\xi )=\Gamma (x)$. Then, considering $\Gamma$ of the general form 
\eqnb\label{convention}
\Gamma (\xi ) = f(\xi ) \sqrt{1-\xi^2} = (a_0U_0 (\xi) + a_2 U_2 (\xi )+\ldots ) \sqrt{1-\xi^2},
\eqne
suggested by the space $H$ (recall \eqref{def_H}), the total lift constraint \eqref{A1} becomes \eqref{A1_simple} and the total moment constraint \eqref{A2} becomes
\eqnb\label{A2_simple}
\frac{b^3}{4} \int_0^1 \left| \int_{\xi}^1 f(\eta) \sqrt{1-\eta^2} (\eta - \xi )\d \eta \right| \d \xi \leq 1.
\eqne

\begin{proof}[Proof of Theorem~\ref{T01}.]
Let $b_k\in [0,b_0]$, $\Gamma_k= f_k \sqrt{1-(\cdot )^2}$ with $f_k = a_{0,k}U_{0}+ a_{2,k} U_2 + \ldots  \in H$ be such that \eqref{A1}, \eqref{A2} hold and 
\[
D[b_k , \Gamma_k ] \longrightarrow \inf_{\substack{b\in [0,b_0],\\ f\in H \text{ satisfying }\eqref{A1},\eqref{A2}}} D[b,\Gamma ]
\]
as $k\to \infty$ (here $\Gamma = f\sqrt{1-(\cdot )^2}$). Since $\| f \|_H\lec D[b,\Gamma ]$ (recall \eqref{drag_norm}) the sequence $\{ f_k \}$ is bounded in $H$, and so, by weak compactness, there exists a subsequence (which we relabel back to $k\geq 0$) such that $f_k \to \wf$ weakly in $H$ as $k\to \infty$, for some $\wf = \widetilde{a_0}U_0 + \widetilde{a_2} U_2 + \ldots  \in H$. In particular
\[
a_{0,k} \coloneqq (f_k,U_0) \to (\wf , U_0 ) = \widetilde{a_0}.
\]
By the total lift constraint \eqref{A1_simple} we thus have $b_k \to \wb$ for some $\wb \in [0,b_0]$. 
It remains to show that the limit $(\wb , \wf )$ satisfies the total moment constraint \eqref{A2_simple}. We note that \eqref{A2_simple} is convex with respect to $f$ (and so is preserved under weak limits), but an additional factor of $b^3$ makes it nonconvex with respect to $( b,f)$. In order to fix this, we apply the classical characterization of convex sets in Banach spaces (see Appendix~\ref{ap_convex_sets}). Namely we set $A_k \colon H\to H$ as $A_k f \coloneqq \frac{b_k}8 f$, and similarly $\widetilde{A} f \coloneqq \frac{\wb}8 f$. Then $\| A_k - \widetilde{A} \|_{H\to H} \to 0$ as $k\to \infty$ and the moment constraint \eqref{A2_simple} for $(b_k,f_k)$ becomes 
\[
A_k f_k \in K,
\]
where $K\subset H$ is the convex set of all $f\in H$ satisfying \eqref{A2_simple} with $b=1$. Applying Lemma~\ref{L01} we thus obtain $\widetilde{A}\wf \in K$, as required.
\end{proof}


\section{An improved model}\label{sec_new_model}
Here we present a more comprehensive model which involves both the bending moment arising from the lift force and the bending moment due to the weight of the wing itself. This is particularly important for the restriction \eqref{A2}, where one  interprets $Ar^2$ as the total weight of the wing. In fact, one could obtain the total weight of a wing by considering instead
\[
Ar^2 C_w,
\]
where $C_w$ is a material constant that measures the resistance to bending moments. Namely $C_w$ represents the weight of the wing per unit span length (i.e., weight of the cross-section) that is needed to support $1Nm$ of bending moment. For brevity, let us assume that $C_w=1$, so that we can think of \eqref{A2} as the weight constraint. In order to incorporate the bending moment due to the weight of the wing itself into the this constraint, one should replace \eqref{bending_moment_intro} with
\eqnb\label{def_M_new}
M(x) = \rho \nu \int_x^{b/2} \Gamma (y ) (y-x) \d y - \rho_w C_w \int_x^{b/2} M(y) (y-x) \d y = \int_x^{b/2} (\Gamma (y) - M(y) ) (y-x)\d y,
\eqne
where, in the second equality, we have set all material constants $\rho_w, C_w$ and free airstream constants $\nu , \rho$ equal to $1$, for simplicity.

We note that this definition of $M$  is implicit, and so $M$ is not immediately well-defined (given $\Gamma$). However it can be observed that the unique solution $M$ is given by
\eqnb\label{sol}
M(\xi ) = \frac{b}{2} \int_\xi^1 \Gamma (\eta ) \sin \left(b (\eta - \xi )/2\right) \d \eta ,
\eqne
where we also abuse the notation to write $M(x) = M(\xi )$, for $x=\xi b/2$.
Indeed, that such $M$ satisfies \eqref{def_M_new} can be verified directly. On the other hand, if $\widetilde{M}$ is another solution of \eqref{def_M_new} then both $M$ and $\widetilde{M}$ satisfy the ODE
\[
M''+ \frac{b^2}4  M = \frac{b^2}4 \Gamma
\]
with the boundary conditions $M(1)=M'(1)=0$, and so $M=\widetilde{M}$, which gives uniqueness.\\

Except for \eqref{A2} we also impose additional assumption that 
\eqnb\label{noncol}
M(x) \geq 0 \qquad \text{ and }\qquad M(x) \geq \int_x^{b/2} M(y) (y-x) \d y\qquad \text{ for all }x\in [0,b/2],
\eqne
which models the constraint that the wing does not collapse due to the bending moment arising from its own weight.

We are thus interested in finding 
\eqnb\label{new_inf}
\inf_{\substack{b\in [0,b_0],\\ \Gamma \in H\text{ satisfying }\eqref{A1},\eqref{A2},\eqref{noncol}}} D[b,\Gamma ],
\eqne
where $M$ is given by \eqref{sol}. 
Analogously to Theorem~\ref{T01}, we can prove existence of a global minimizer.
\begin{thm}[Minimizer of the minimal induced drag problem \eqref{new_inf}]\label{T02}
Given $b_0>0$ there exist $\wb\in [0,b_0]$ and $\wG \in H$ satisfying the total lift and moment assumptions \eqref{A1}, \eqref{A2} with the moment function $M$ defined by \eqref{def_M_new}, such that 
\[
D[\wb ,\wG] = \min_{\substack{b\in [0,b_0],\\ \Gamma \in H\text{ satisfying }\eqref{A1},\eqref{A2},\eqref{noncol}}} D[b,\Gamma ].
\]
\end{thm}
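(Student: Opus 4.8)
The plan is to mimic the proof of Theorem~\ref{T01} as closely as possible, using weak compactness in $H$ together with the convex-set characterization of Lemma~\ref{L01}, and to handle the two additional constraints \eqref{noncol} by showing they are also preserved under the relevant limits. First I would take a minimizing sequence $b_k\in[0,b_0]$, $\Gamma_k = f_k\sqrt{1-(\cdot)^2}$ with $f_k\in H$ satisfying \eqref{A1}, \eqref{A2}, \eqref{noncol}, and with $D[b_k,\Gamma_k]$ converging to the infimum in \eqref{new_inf}. As before, the bound $\|f_k\|_H \lesssim D[b_k,\Gamma_k]$ from \eqref{drag_norm} gives boundedness in $H$, so after passing to a subsequence $f_k\to\wf$ weakly in $H$; the zeroth coefficient converges, $a_{0,k}=(f_k,U_0)\to(\wf,U_0)=\widetilde{a_0}$, and the lift constraint \eqref{A1_simple} then forces $b_k\to\wb$ for some $\wb\in[0,b_0]$. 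Lower semicontinuity of $D$ (it is the square of the $H^{1/2}$-type norm, hence weakly lower semicontinuous) ensures $D[\wb,\wG]$ does not exceed the infimum, so it remains only to verify that the limit $(\wb,\wf)$ is admissible, i.e.\ satisfies all three constraints.

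The moment constraint \eqref{A2} is the delicate one because here $M$ is defined implicitly through \eqref{def_M_new}, equivalently by the explicit formula \eqref{sol}, which now depends nonlinearly on $b$ through the kernel $\sin(b(\eta-\xi)/2)$. I would therefore recast \eqref{A2} as the requirement that a suitable operator applied to $f$ lands in a fixed convex set, in order to invoke Lemma~\ref{L01}. Concretely, define for each $b$ the linear operator $S_b\colon H\to L^1(0,1)$ (or into $H$ after a harmless embedding) by $(S_b f)(\xi) := \frac{b}{2}\int_\xi^1 f(\eta)\sqrt{1-\eta^2}\,\sin(b(\eta-\xi)/2)\,\d\eta$, so that $M=S_b f$ via \eqref{sol}, and let $K$ be the convex set $\{\,g : \int_0^1 |g(\xi)|\,\d\xi\le 1\,\}$, so that \eqref{A2} reads $S_{b_k} f_k\in K$. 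Since $b_k\to\wb$ and the kernel $\sin(b(\eta-\xi)/2)$ depends continuously (indeed smoothly) on $b$ uniformly in $\xi,\eta\in[0,1]$, one gets the operator-norm convergence $\|S_{b_k}-S_{\wb}\|\to 0$, exactly the hypothesis needed for Lemma~\ref{L01}; combined with $f_k\to\wf$ weakly this yields $S_{\wb}\wf\in K$, i.e.\ the limit satisfies \eqref{A2}.

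The new ingredient relative to Theorem~\ref{T01} is the pair of sign/non-collapse conditions \eqref{noncol}. Both are of the form ``a pointwise functional of $M$ is nonnegative for all $x$,'' and since $M=S_b f$ depends linearly on $f$, each condition determines a closed convex (in fact conic) subset of $H$, with the $b$-dependence again entering only through the continuously varying kernel. I would treat $M(x)\ge 0$ and $M(x)\ge\int_x^{b/2}M(y)(y-x)\,\d y$ either by a second application of the convex-set lemma to the operators $S_b$ and $(I - T_b)S_b$ respectively (where $T_b$ is the integral operator $g\mapsto \int_\xi^1 g(\eta)(\eta-\xi)\,\d\eta$ in the rescaled variable), or, more directly, by noting that weak convergence $f_k\to\wf$ together with $b_k\to\wb$ implies pointwise convergence of $M_k=S_{b_k}f_k$ to $\wb$-version $\widetilde M=S_{\wb}\wf$ at each fixed $\xi$ (the kernel against which $f_k$ is tested is a fixed $H$-element depending continuously on $b$), so that the pointwise inequalities pass to the limit. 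The main obstacle I anticipate is purely bookkeeping: verifying the operator-norm continuity $b\mapsto S_b$ and the joint $(b,f)$-continuity of the constraint functionals carefully enough that Lemma~\ref{L01} applies, and confirming that the appropriate target space (for the $|M|$ integral this is $L^1$, for the pointwise conditions this is $C([0,1])$) embeds compatibly so that weak convergence in $H$ transfers to the mode of convergence each constraint requires; none of these steps is deep, but each must be checked since the kernel's $b$-dependence is genuinely nonlinear, unlike the simple scaling $A_k f = \tfrac{b_k}{8}f$ used in the proof of Theorem~\ref{T01}.
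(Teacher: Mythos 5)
Your proposal is correct and follows essentially the same route as the paper, whose entire proof of Theorem~\ref{T02} is the single sentence that it is ``analogous to the proof of Theorem~\ref{T01}'': you carry out exactly that analogy, replacing the scalar multiplication operator by the $b$-dependent kernel operator $S_b$ (with $\|S_{b_k}-S_{\wb}\|\to 0$ so Lemma~\ref{L01} applies) and checking that \eqref{noncol} passes to the limit via pointwise convergence of $M_k$ against the strongly convergent kernels. The only bookkeeping slip --- the constraint \eqref{A2} in rescaled variables is $b\int_0^1|M(\xi)|\,\d\xi\le 1$, so the operator fed into Lemma~\ref{L01} should be $bS_b$ rather than $S_b$ --- is harmless, since the extra factor of $b$ preserves the operator-norm continuity in $b$ on which your argument rests.
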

\begin{proof}
The proof is analogous to the proof of Theorem~\ref{T01}. 
\end{proof}

\subsection{Solution of the improved model in $H_2$}\label{sec_h2}
Here we restrict ourselves to the case of the first two modes, i.e. we consider $\Gamma(\xi ) =  (a_0 U_0 (\xi ) + a_2 U_2 (\xi ))\sqrt{1-\xi^2}$, in which case  \eqref{sol} becomes
\[
M(\xi ) = \frac{b}2   \int_{\xi}^1(a_0 U_0 (\eta ) +a_2 U_2 (\eta )) \sqrt{1-\eta^2} \sin (b(\eta- \xi)/2)\d \eta  .
\]
We consider only nonnegative circulation distribution, $\Gamma \geq 0 $, i.e. $a_2\geq -a_0/3$. Then the  total lift constraint \eqref{A1} (as before) becomes \eqref{A1_simple}. On the other hand, the weight constraint \eqref{A2} becomes
\eqnb\label{A2_new}
\frac{b^2}4 \int_0^1 (a_0 U_0 (\xi ) + a_2 U_2 (\xi )) \sqrt{1-\xi^2 } \left( 1-\cos \frac{b\xi }2 \right) \d \xi= b \int_0^1 M(\xi ) \d \xi \leq 1
\eqne
(recall that we suppose that all constants equal $1$). In contrast to Prandtl's model (recall \eqref{calc1}), we  cannot rewrite \eqref{A2_new} as an explicit relation between $a_0 $ and $a_2$. Instead, we rely on a numerical simulation using MATLAB, and we present the result in Figure~\ref{fig_prandtl2} below.
\begin{center}
 \includegraphics[width=9cm]{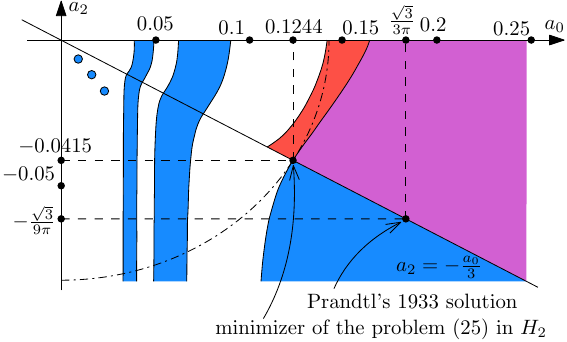}
 
 \nopagebreak
 \captionsetup{width=.8\linewidth}
  \captionof{figure}{A geometric interpretation of the optimal solution of the problem \eqref{new_inf}, and comparison to Prandtl's 1933 solution to the minimimal induced drag problem. Here the region above the line $a_2=-a_0/3$ represent the nonnegativity assumption $M\geq 0$ (see \eqref{noncol}), the blue region represents the total weight constraint \eqref{A2}, and the red region represents the ``noncollapse'' condition (i.e., the second part of \eqref{noncol}). The blue dots at the top-left corner indicate that the nonphysical vertical blue regions continue to appear and concentrate aa $a_0\to 0^+$.}\label{fig_prandtl2} 
\end{center}

Figure~\ref{fig_prandtl2} shows that the minimum of the induced drag under condition \eqref{noncol} and prescribed total lift \eqref{A1}, total weight \eqref{A2} corresponds to the point in the purple region (i.e. both red and blue) that is located on or above the line $a_2=-a_0/3$ and that is closest to the origin in the sense of minimized $a_0^2+3a_2^2$ (recall~\eqref{01}), represented in the figure by the dot-dashed line. Based on the sketch, the induced drag is about $0.45828$ of the induced drag corresponding to the Prandtl's solution \eqref{01} (i.e., obtained using \eqref{bending_moment_intro} as the definition of $M$, rather than \eqref{sol}), i.e. a $54.2\%$ reduction.

This seemingly high number is a result our assumption of all physical constants equal to $1$. For the given values of physical constants  the improvement is, most likely, much smaller, but the improved model will always give a smaller induced drag than Prandtl's 1933 solution. Indeed, the defintion \eqref{def_M_new} involves the lift force, arising from $\Gamma$, and the force due to weight, arising from $M$, which point in opposite directions, and so a cancellation of these forces, expressed in the solution \eqref{sol}, gives a more efficient weight distribution.

\section*{Discussion}

Let us first consider the hypothetical scenario in which one is able to produce infinitely strong materials. In that case, i.e., when $C_w \to \infty$, this would give arbitrarily small induced drag, since the total mass constraint, $Ar^2 C_w \geq \rho \nu \int_{-b/2}^{b/2} |M(x)| \d x$, becomes less and less relevant, and so, taking $a_{4}=a_6=\ldots =0$ and $a_2\to 0$, the induced drag \eqref{drag_norm} becomes $D=\pi a_0^2/8 \to 0$ by taking $a_0\to 0$. Then also the wingspan $b\sim 1/a_0\to \infty$, due to the lift constraint \eqref{A1}. This is reminiscent of supersailplanes, such as Eta or Concordia \cite{eta,concordia}. \\

We note that the moment $M$ definition \eqref{def_M_new} constraint admits easy generalizations, including spanwise material strength variability, or incorporation of additional, non-structural weight, such as engines or fuel tanks. For example, if $\rho(x)$ denotes a prescribed spanwise additional weight distribution, so that $\int_{a_0}^{a_1} \rho (x) \d x$ is the additional weight located between $x=a_0$ and $x=a_1$, then \eqref{def_M_new} should be augmented by subtracting $\rho(x)$ on the right-hand side, which leads to an ODE that can be solved similarly to \eqref{sol}.



\section*{Declarations}
\subsection*{Funding and/or Conflicts of interests/Competing interests}

There is no conflict of interests. There are no competing interests.

\appendix

\renewcommand{\theequation}{\thesection.\arabic{equation}}

\section{Chebyshev polynomials}\label{ap_chebyshev}
We recall \cite[Section~22]{abramowitz_stegun} Chebyshev polynomials $T_n$, $U_n$ of the first kind and second kind, respectively, defined by
\eqnb\label{cheb1}
T_0(x) = U_0 (x) \coloneqq 1, \quad T_1(x) \coloneqq x, \quad U_1 (x) \coloneqq 2x,
\eqne
and by
\eqnb\label{cheb2}
\begin{split} T_{n+1}  (x)& \coloneqq 2x T_n (x) -T_{n-1} (x),\\
U_{n+1} (x) &\coloneqq  2x U_n (x) - U_{n-1} (x)
\end{split}
\eqne
for $n\geq 1$. The Chebyshev polynomials are related to each other in a number of remarkable ways. Here we only list those properties used in the note. First, we have 
\eqnb\label{cheb_der}
U_n' = \frac{xU_n - (n+1) T_{n+1}}{1-x^2}
\eqne
for all $n$. Second,
\eqnb\label{cheb_integral1}
\int_{-1}^1 \frac{T_n (y) }{\sqrt{1-y^2} (x-y) } \d y = - \pi U_{n-1} (x)
\eqne
for all. Finally, the $U_n$'s form an orthonormal basis of $H$ (recall~\eqref{def_H}), with
\eqnb\label{cheb_u_ortho}
\int_{-1}^1 U_n (x) U_m(x) \sqrt{1-x^2} \d x = \begin{cases}
0\qquad &m\ne n,\\
\pi/2 & m=n.
\end{cases}
\eqne

\section{Convex sets in Banach spaces}\label{ap_convex_sets}
Here we prove the following fact relating convex sets in Banach spaces and weak convergence.
\begin{lemma}[Convex sets in Banach spaces and weak limits]\label{L01}
Let $X,Y$ be real Banach spaces. Suppose that $x_k \rightharpoonup x$ in $X$ as $k\to \infty$, and that $A_k,A \in B(X,Y)$ are such that $\| A_k - A \|_{B(X,Y)} \to 0$ as $k\to \infty$. Let $K\subset Y$ be a closed convex set. Then,
\[
\text{ if }A_kx_k \in K\text{ for all }k,\qquad \text{ then }\qquad Ax \in K.
\]
\end{lemma}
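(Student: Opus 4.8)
The plan is to prove that $A_k x_k \rightharpoonup Ax$ weakly in $Y$, and then to conclude by the fact that a closed convex set is weakly closed. The operator-norm convergence $\|A_k - A\|_{B(X,Y)}\to 0$ is exactly what lets us replace the varying operators $A_k$ by the single limiting operator $A$ without losing track of the constraint set $K$.

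First I would record that the sequence $\{x_k\}$ is norm-bounded: since $x_k \rightharpoonup x$, the uniform boundedness principle (applied to the family $\{x_k\}$ viewed in the bidual, i.e.\ the fact that weakly convergent sequences are bounded) gives $C \coloneqq \sup_k \|x_k\|_X < \infty$. Next I would split
\[
A_k x_k = A x_k + (A_k - A) x_k.
\]
The remainder term is controlled strongly by
\[
\|(A_k - A) x_k\|_Y \leq \|A_k - A\|_{B(X,Y)} \, \|x_k\|_X \leq C\,\|A_k - A\|_{B(X,Y)} \longrightarrow 0,
\]
so $(A_k - A)x_k \to 0$ in norm, hence also weakly. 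For the main term I would use that a bounded linear operator is weak-to-weak continuous: for every $\phi \in Y^*$ one has $\phi(A x_k) = (A^*\phi)(x_k) \to (A^*\phi)(x) = \phi(Ax)$ because $A^*\phi \in X^*$ and $x_k \rightharpoonup x$; thus $A x_k \rightharpoonup Ax$ in $Y$. Adding the two contributions yields $A_k x_k \rightharpoonup Ax$.

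To finish, I would invoke that $K$, being closed and convex, is weakly closed — this is the standard consequence of the Hahn–Banach separation theorem (Mazur's theorem). Since $A_k x_k \in K$ for every $k$ and $A_k x_k \rightharpoonup Ax$, weak closedness of $K$ gives $Ax \in K$, as claimed.

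The only genuinely nontrivial ingredient is the final step, namely that closedness plus convexity of $K$ upgrades to weak closedness; without convexity the weak limit of points of $K$ need not lie in $K$, and this is precisely the structural property the lemma exploits to handle the nonconvex dependence on $(b,f)$ in the application. The remaining estimates (boundedness of $\{x_k\}$, the strong smallness of $(A_k-A)x_k$, and weak continuity of $A$) are routine.
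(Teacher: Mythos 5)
Your proof is correct, and it rests on the same two ingredients as the paper's: boundedness of the weakly convergent sequence $\{x_k\}$ plus strong smallness of $(A_k - A)x_k$, and a Hahn--Banach-type fact about closed convex sets. The difference is one of packaging. You first establish the clean intermediate statement $A_k x_k \rightharpoonup Ax$ in $Y$ (splitting $A_k x_k = A x_k + (A_k - A)x_k$, handling the main term by weak-to-weak continuity of $A$ via the adjoint) and then finish by citing Mazur's theorem that a strongly closed convex set is weakly closed. The paper never isolates the weak convergence of $A_k x_k$; instead it fixes an arbitrary supporting functional $l \in Y^*$ of $K$, writes $l(Ax) = l(A(x - x_k)) + l((A - A_k)x_k) + l(A_k x_k)$, estimates the three terms by $\varepsilon/2 + \varepsilon/2 + l_0$, and concludes $l(Ax) \le l_0$ for every supporting hyperplane, using that $K$ is the intersection of its supporting half-spaces. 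These are the same computation viewed from two angles: your route is more modular and reusable (the statement $A_k x_k \rightharpoonup Ax$ is of independent interest), while the paper's is self-contained in that it effectively re-proves the relevant instance of Mazur's theorem inline rather than invoking it as a black box; both ultimately appeal to the same separation-theorem machinery.
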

\begin{proof}
Let $l\in Y^*$ denote any supporting hyperplane of $K$ in $Y$, i.e. 
\[
K\subset \{ y\in Y \colon l(y)\leq l_0 \}
\]
for some $l_0\in R$. Since $\{ x_k \}$ converges weakly, it is bounded, and so there exists $M>0$ such that $\| x_k \|\leq M$ for all $k$. Given $\varepsilon >0$ we let $k$ be sufficiently large so that
\eqnb\label{temp02}
|l(A(x-x_k)|\leq \frac{\varepsilon }2 \qquad \text{ and }\qquad \| A-A_k \| \leq \frac{\varepsilon }{2\| l \|_{Y^*}M}.
\eqne
Note that such choice is possible by assumptions, since  $l\circ A \in X^*$. We obtain
\[
\begin{split}
l(Ax) &= l(A(x-x_k)) + l((A-A_k)x_k ) + l(A_k x_k ) \\
&\lec |l(A(x-x_k))| + \| l \|_{Y^*} \| A- A_k \| M + l_0 \\
&\leq l_0 + \varepsilon,
\end{split}
\]
where we used the assumption $A_kx_k\in K$ in the second line and \eqref{temp02} in the last. Taking $\varepsilon \to 0$ we thus have that $l(Ax) \leq l_0$, and the claim follows, since every closed convex set in $Y$ equals to the intersection of all its supporting hyperplanes (see \cite[Corollary~21.8]{JCR_book}, for example).
\end{proof}

\bibliographystyle{plain}
\bibliography{literature}

\end{document}